\DeclareMathOperator{\Mat}{Mat}
\DeclareMathOperator{\Irr}{Irr}
\def\tm#1{\item[{\rm (#1)}]}
\def\css{\begin{cases}}
\def\ecss{\end{cases}}
\def\C{{\mathbb C}}
\def\CC{{\cal C}}
\def\S{\mathcal{S}}
\def\h{{\mathcal{H} }}
\def\bull{\vrule height 1.2ex width 1ex depth -.1ex }
\renewcommand{\subsection}{\@startsection{subsection}{2}{0mm}{-2mm}{-2mm}
{\bf\normalsize}}
\def\nmrt{\begin{enumerate}}
\def\enmrt{\end{enumerate}}
\def\tm#1{\item[{\rm (#1)}]}
\newtheorem{formula}{}[section]
\newtheorem{definition}[formula]{Definition}
\newtheorem{corollary}[formula]{Corollary}
\newtheorem{remark}[formula]{Remark}
\newtheorem{lemma}[formula]{Lemma}
\newtheorem{theorem}[formula]{Theorem}
\newtheorem{example}[formula]{Example}
\begin{document}

\title{Standard Character Condition for C-algebras}
\author{
J. Bagherian and A. Rahnamai Barghi\footnote{Corresponding author: rahnama@iasbs.ac.ir} \\ \\
Institute for Advanced Studies in Basic Sciences \\
P.O. Box: 45195-1159, Zanjan-Iran \\ \\
{\tt  bagherian@iasbs.ac.ir}\\
{\tt  rahnama@iasbs.ac.ir}}

\maketitle
\begin{abstract}
\noindent
It is well known that the adjacency algebra of an
association scheme has the standard character. In this paper we
first define the concept of standard character for C-algebras and
we say that a C-algebra has the {\it standard character condition}
if it has the standard character. Then we investigate some
properties of C-algebras which have the standard character
condition and prove that under some conditions a C-algebra  has an
adjacency algebra homomorphic image. In particular, we obtain a
necessary and sufficient condition for which a commutative table algebra
comes from an association scheme.
\end{abstract}
\smallskip

\noindent {\it Key words :} cellular algebra; C-algebra; table
algebra; standard character.
\newline {\it AMS Classification:}
20C99; 16G30; 05E30.
\section{Introduction}
A table algebra is a C-algebra with nonnegative structure
constants was introduced by \cite{Ar2}. As a folklore example, the
adjacency algebra of an association scheme (or homogeneous
coherent configuration) is an integral table algebra. On the other hand,
the adjacency algebra of an association scheme has a special
character which is called the {\it standard character}, see \cite{HIG}.
We generalize the concept of standard character from adjacency
algebras to C-algebras. This generalization enables us to find a
necessary and sufficient condition for which a commutative table
algebra comes from an association scheme.

In section \ref{2309072} we recall the concept of C-algebras
and some related properties which we will use in this paper.

In section \ref{2309073} we first define the {\it standard
feasible trace} for C-algebras which is a generalization of the
standard character in the theory of association schemes.
Thereafter, we show that the standard feasible multiplicities of
the characters of a table algebra and its quotient are the same.
Furthermore, we prove that the set of standard feasible
multiplicities preserve under C-algebras isomorphism.

In section \ref{stchar} we give an example of C-algebra for which
the standard feasible trace is a character, such character is
called the {\it  standard character}. By using the standard
character we obtain a necessary and sufficient condition for which
a commutative table algebra comes from an association scheme.

\section{Preliminaries}\label{2309072}
Although in algebraic combinatorics the concept of C-algebra is
used for commutative algebras, in this paper we will also consider
non-commutative algebras.
Hence we deal with non-commutative C-algebras in the sense of \cite{EPV} as the following:\\

Let $A$ be a finite dimensional associative algebra over the
complex field $\C$ with the identity element $1_A$ and a base $B$
in the linear space sense. Then the pair $(A,B)$ is called a {\it
non-commutative C-algebra} if the following conditions (I)-(IV)
hold: \nmrt \tm{I} $1_A \in B$ and the structure constants of
${B}$ are real numbers, i.e., for $a,b \in {B}$:
$$
ab = \displaystyle\sum _{c\in {B}}\lambda_{abc}c,\ \ \ \ \
\lambda_{abc} \in \mathbb{R}.
$$
\tm{II} There is a semilinear involutory anti-automorphism
(denoted by $^{*}$) of $A$ such that ${B}^{*} = {B}$. \tm{III} For
$a,b\in B$ the equality $\lambda_{ab1_A} = \delta_{ab^*}|a|$ holds
where $|a|>0$ and $\delta$ is the Kronecker symbol. \tm{IV} The
mapping $b\rightarrow |b|, b\in B$ is a one dimensional
$*$-linear representation of the algebra $A$, which is called the
{\it degree map}. \enmrt

\begin{remark}
In the definition above we should mention that if the algebra $A$
is commutative, then $(A,B)$ becomes a C-algebra in the sense of
\cite{Ban}.
\end{remark}

If the structure constants of a given C-algebra (resp. commutative)
are nonnegative real numbers, then it is called a
{\it table algebra} (resp. {\it commutative}) in the sense of
\cite{Ar2} (resp. \cite{Ar1}).

Throughout this paper a C-algebra ( resp. table algebra) will
mean a non-commutative C-algebra (resp. non-commutative table algebra).

A C-algebra (table algebra) is called {\it integral} if all its structure
constants $\lambda_{abc}$ are integers.
The value $|b|$ is called the {\it degree} of the basis element
$b$. From condition (IV) we see that $|b| = |b^*|$ for all $b\in
B$, and from condition (II) for $a=\sum_{b\in B}x_bb$ we have
$a^*=\sum_{b\in B}\overline{x_b}b^*$, where $\overline{x_b}$ means
the complex conjugate to $x_b$. This implies that the Jacobson
radical $J(A)$ of the algebra $A$ is equal to $\{0\}$ which means
$A$ is semisimple.

Let $(A,B)$ and $(A',B')$ be two C-algebras. A {\it C-algebra
homomorphism} from $(A,B)$ to  $(A',B')$ is an $*$-algebra
homomorphism $f: A \to A'$ such that $f(B)=B'$. Such C-algebra
homomorphism is called {\it C-algebra epimorphism} (resp. {\it monomorphism})
if $f$ is onto (resp. into). A  C-algebra epimorphism $f$ is called
a {\it C-algebra isomorphism} if $f$ is monomorphism too. Two
C-algebras $(A,B)$ and $(A',B')$ are called
{\it isomorphic}, if there exists a C-algebra isomorphism between them.\\

Given a table algebra $(A,B)$, the bilinear form $\langle \cdot,
\cdot \rangle$ on $A$ is defined in \cite{Ar2} by setting $\langle
x, y \rangle =t(xy^*)$, for $x,y \in A$, where $t:A\rightarrow \C$
is a linear function defined by $t(\sum_{b\in B}x_bb)=x_{1_A}$. Then
one can see that $\langle \cdot, \cdot \rangle$ is a Hermitian
positively definite form on $A$.

A nonempty subset $C\subseteq B$ is called a {\it closed subset},
if ${C}^{*}{C} \subseteq {C}$. We denote by $\mathcal{C}(B)$ the
set of all closed subsets of $B$.

Let $(A,B)$ be a table algebra with the basis $B$ and let $C\in
\mathcal{C}(B)$. From \cite[Proposition 4.7]{Ar}, it follows that
$\{{C}b{C} \mid \ b\in {B}\}$ is a partition of ${B}$. A subset
${C}b{C}$ is called a {\it $C$-double coset} or {\it double coset}
with respect to the closed subset ${C}$. Let
$$
b/C := |C^+|^{-1}({C}b{C})^{+} = |C^+|^{-1}\displaystyle\sum_{x\in
{C}b C}x
$$
where $C^+ = \sum_{c\in C}c$ and  $|C^+|=\sum_{c\in C}|c|$. Then
the following theorem is an immediate consequence of \cite[Theorem
4.9]{Ar}:

\begin{theorem}
\label{Arad} Let $(A,{B})$ be a table algebra and let $C\in
\CC(B)$. Suppose that $\{b_1=1_A,\ldots ,b_k\}$ be a complete set
of representatives of $C$-double cosets. Then the vector space
spanned by the elements $b_i/{C}, 1\leq i\leq k$, is a table
algebra ( which is denoted by $A/{C}$) with a distinguished basis
${B}/{C} = \{b_i/{C} \mid \ 1\leq i \leq k \}.$ The structure
constants of this algebra are given by the following formula:
$$\gamma_{ijk}=
|C^+|^{-1}\sum_{r\in Cb_iC,s\in Cb_jC} \lambda_{rst} $$ where
$t\in C b_k C$ is an arbitrary element.\hfill \bull
\end{theorem}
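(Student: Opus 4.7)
The plan is to verify directly that $B/C:=\{b_i/C:1\le i\le k\}$ is a basis satisfying axioms (I)--(IV) of a table algebra. Since $\{Cb_1C,\ldots,Cb_kC\}$ partitions $B$, the elements $(Cb_iC)^+$ are linearly independent in $A$, so the scalar rescaling $b_i/C=|C^+|^{-1}(Cb_iC)^+$ gives $k$ linearly independent vectors. The first substantive step is to expand
\[
(b_i/C)(b_j/C)=|C^+|^{-2}\sum_{r\in Cb_iC,\,s\in Cb_jC}\sum_{t\in B}\lambda_{rst}\,t
\]
and regroup the inner sum according to the unique double coset $Cb_kC$ containing $t$. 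For this to give a linear combination of the $b_k/C$, one needs the quantity $\sum_{r\in Cb_iC,\,s\in Cb_jC}\lambda_{rst}$ to be constant as $t$ ranges over $Cb_kC$; this is precisely the invariance property that Arad establishes in the proof of \cite[Theorem 4.9]{Ar}. Once granted, matching coefficients yields the stated formula for $\gamma_{ijk}$, and these numbers are real (indeed nonnegative), settling axiom (I) and the table-algebra condition.

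Next I would check the remaining axioms. For (II), the involution $*$ satisfies $(CbC)^*=Cb^*C$, so it permutes the double cosets, permutes the corresponding representatives up to a reindexing $i\mapsto i^*$, and hence restricts to an involution on $B/C$. For axiom (III), the coefficient of $1_A$ in $(b_i/C)(b_j/C)^*$ is obtained by selecting $t=1_A$ in the expansion above, and since $1_A\in Cb_kC$ only when $k=1$, this forces $j=i^*$; evaluating the resulting sum via $\lambda_{rr^*1_A}=|r|$ then produces the required degree $|b_i/C|$. Axiom (IV) follows because the degree map on $B/C$ is obtained by post-composing the original degree map of $A$ with the injection $b_i/C\mapsto |C^+|^{-1}(Cb_iC)^+$, and the original degree map is a one-dimensional $*$-representation.

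The main obstacle is the invariance claim for the coefficient $\sum_{r,s}\lambda_{rst}$ across a single double coset. The cleanest way to handle this is to work with $C^+b_iC^+\cdot b_j/C$ and argue, using that $C^+$ is a ``central-like'' idempotent-up-to-scalar for the action on $Cb_kC$, that left or right multiplication by elements of $C$ permutes the contributions within each double coset without changing totals. This amounts to the observation that $C^+\cdot(Cb_kC)^+=|C^+|\cdot(Cb_kC)^+$, which is standard in the theory of closed subsets of table algebras and is exactly the content underlying \cite[Theorem 4.9]{Ar}.

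Since the theorem is advertised as an immediate consequence of Arad's result, my proof would in practice consist of quoting \cite[Theorem 4.9]{Ar} to obtain the quotient table algebra structure, then spelling out the structure-constant formula by the bookkeeping above. I do not expect any genuinely new difficulty beyond confirming that Arad's argument, originally phrased for the commutative setting, transfers verbatim to the non-commutative table algebras considered here; this is routine since axioms (I)--(IV) make no commutativity assumption and the double-coset partition in \cite[Proposition 4.7]{Ar} is stated in the same generality.
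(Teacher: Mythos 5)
Your proposal is correct and matches the paper's approach: the paper gives no argument of its own, simply recording the theorem as an immediate consequence of Theorem 4.9 of Arad--Fisman--Muzychuk, which is exactly what your proof reduces to (your bookkeeping for the structure constants and the axioms is sound, and the coefficient-matching does yield the stated $\gamma_{ijk}$). The only small inaccuracy is your remark that Arad's result is "originally phrased for the commutative setting" --- the cited reference, \emph{Generalized Table Algebras}, already treats the non-commutative case, so no transfer argument is needed.
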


The table algebra $(A/C,B/C)$ is called the {\it quotient table
algebra} of $(A,B)$ modulo $C$.

We refer the reader to \cite{Zi} for the background of association
schemes.

\section{The standard feasible trace for C-algebras}\label{2309073}
In this section we first define the standard feasible trace for
C-algebras and then we show that the standard feasible
multiplicities of the characters of a table algebra and
its quotient are the same. Furthermore, we prove that the set
of standard feasible multiplicities preserve under C-algebras isomorphism.\\

Let $(A,B)$ be a C-algebra and let $\Irr(A)$ be the set of
irreducible characters of $A$. We define a linear function $\zeta
\in \rm{Hom_{\C}(A,\C)}$ by $\zeta(b)=\delta_{1_Ab}|B^+|$, for
$b\in B$, where $|B^+| = \sum_{b\in B}|b|$. It is easily seen that
$\zeta(bc)=\zeta(cb)$, for all $b,c\in B$. This shows that $\zeta$
is a {\it feasible trace} in the sense of \cite{Hi}. In addition,
since ${\rm rad}\zeta = \{0\}$, where ${\rm rad}(\zeta) = \{ x\in
A: \zeta(xy) = 0,~ \forall y\in A \}$, it is a non-degenerate
feasible trace on $A$. Therefore, from \cite{Hi} it follows that
$\zeta=\displaystyle\sum _{\chi\in \rm{Irr(A)}}\zeta_\chi\chi$
where $\zeta_\chi\in \C$ and all $\zeta_\chi$ are non-zero. We
call $\zeta$ the {\it{standard feasible trace}}, $\zeta_\chi$
the {\it standard feasible multiplicity} of $\chi$ and $\{\zeta_\chi |~ \chi \in \Irr(A)\}$,
the {\it set of standard feasible multiplicities} of the C-algebra $(A,B)$.\\

Since $A$ is a semisimple algebra
$$
A=\displaystyle\bigoplus_{\chi\in \Irr(A)}A\varepsilon{_\chi}
$$
where $\varepsilon_{\chi}$'s are the central primitive
idempotents.
\begin{lemma}\label{130308}
\nmrt \tm{i} Let $\chi\in \Irr(A)$. Then
\begin{eqnarray}\label{23100701}
\varepsilon_{\chi}=\frac{1}{|B^+|}\displaystyle\sum _{b\in
B}\frac{\zeta_{\chi}\chi(b^*)}{|b^*|}b.
\end{eqnarray}
\tm{ii} {\rm{(Orthogonality Relation)}} For every $\phi, \psi \in
\rm{Irr(A)}$
\begin{eqnarray}\label{23100702}
\frac{1}{|B^+|}\displaystyle\sum_{b\in
B}\frac{1}{|b^*|}\phi(b^*)\psi(b)=
\delta_{\phi\psi}\frac{\phi(1)}{\zeta_{\phi}}.
\end{eqnarray}
\tm{iii}In commutative case, for  every $b,c \in B$
$$\displaystyle\sum_{\chi \in \Irr(A)}\zeta_\chi\chi(b)\chi(c^*)=\delta_{bc}|b||B^+|.$$
\enmrt
\end{lemma}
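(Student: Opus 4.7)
The plan is to treat (i) as the crux and derive (ii) and (iii) from it by short algebraic manipulations. For (i), I would expand $\varepsilon_\chi=\sum_{b\in B}\alpha_b b$ and compute $\alpha_b$ in two ways. First, the functional $t$ defined earlier by $t(\sum x_b b)=x_{1_A}$ satisfies $t(bc^*)=\delta_{bc}|b|$ by axiom (III), so $t(\varepsilon_\chi b^*)=\alpha_b|b|$. Second, from the definition $\zeta(b)=\delta_{1_A b}|B^+|$ we have $\zeta=|B^+|\,t$, and the decomposition $\zeta=\sum_{\psi\in\Irr(A)}\zeta_\psi\psi$ gives
\[
\zeta(\varepsilon_\chi b^*)=\sum_{\psi}\zeta_\psi\,\psi(\varepsilon_\chi b^*).
\]
Since $\varepsilon_\chi$ is the central primitive idempotent corresponding to $\chi$, the element $\varepsilon_\chi x$ lies in the $\chi$-isotypic block $A\varepsilon_\chi$, on which $\psi$ vanishes unless $\psi=\chi$; hence $\psi(\varepsilon_\chi b^*)=\delta_{\chi\psi}\chi(b^*)$. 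Combining the two expressions for $t(\varepsilon_\chi b^*)$ and using $|b|=|b^*|$ yields the formula in (i).

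For (ii), I would apply the character $\phi$ to both sides of equation (\ref{23100701}) written for $\varepsilon_\psi$. The left-hand side is $\phi(\varepsilon_\psi)=\delta_{\phi\psi}\phi(1)$ by the same idempotent argument as above, while the right-hand side becomes $\frac{\zeta_\psi}{|B^+|}\sum_{b}\frac{\psi(b^*)\phi(b)}{|b^*|}$. Solving for the sum and relabelling (using that the resulting identity is non-trivial only when $\phi=\psi$, in which case $\zeta_\phi=\zeta_\psi$) delivers (\ref{23100702}).

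For (iii), I would exploit that in the commutative case every irreducible character $\chi$ is one-dimensional, so $b\varepsilon_\chi=\chi(b)\varepsilon_\chi$ and consequently
\[
b=\sum_{\chi\in\Irr(A)}\chi(b)\,\varepsilon_\chi.
\]
Substituting the expression from (i) for each $\varepsilon_\chi$ and collecting terms gives
\[
b=\frac{1}{|B^+|}\sum_{c\in B}\frac{1}{|c|}\Bigl(\sum_{\chi}\zeta_\chi\chi(b)\chi(c^*)\Bigr)c.
\]
Comparing the coefficient of $c$ on both sides yields the desired identity $\sum_\chi\zeta_\chi\chi(b)\chi(c^*)=\delta_{bc}|b||B^+|$.

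The main obstacle is really just (i); once the formula for $\varepsilon_\chi$ is in hand, parts (ii) and (iii) are extractions of the expected ``row'' and ``column'' orthogonality relations. The only subtle point is the clean use of $\psi(\varepsilon_\chi x)=\delta_{\chi\psi}\psi(x)$, which relies on the Wedderburn-type decomposition $A=\bigoplus_\chi A\varepsilon_\chi$ recorded just before the lemma, together with the fact that $\zeta=|B^+|\,t$ allows us to pass freely between the combinatorial functional $t$ (encoding axiom (III)) and the spectral decomposition $\zeta=\sum_\chi\zeta_\chi\chi$.
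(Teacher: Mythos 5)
Your proof is correct and rests on exactly the same idea the paper invokes: the dual basis of $b$ with respect to the non-degenerate feasible trace $\zeta$ is $b^*/(|b||B^+|)$, and your two-way computation of $t(\varepsilon_\chi b^*)$ (using $t(bc^*)=\delta_{bc}|b|$ on one side and $\zeta=|B^+|t=\sum_\psi\zeta_\psi\psi$ with $\psi(\varepsilon_\chi x)=\delta_{\chi\psi}\psi(x)$ on the other) is precisely this dual-basis fact made explicit, where the paper simply defers to Higman's results [5.7] and [5.5$'$]. The only cosmetic divergence is in part (ii), which the paper extracts from $\varepsilon_\phi\varepsilon_\psi=\delta_{\phi\psi}\varepsilon_\phi$ by reading off the coefficient of $1_A$, whereas you apply $\phi$ to the formula for $\varepsilon_\psi$ and then relabel $b\mapsto b^*$; both are equivalent one-line consequences of (i).
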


\begin{proof}
Part (i) and (iii) follow from \cite[5.7]{Hi} and
\cite[$5.5'$]{Hi}, respectively, by using the concept of dual basis
relative to a non-degenerate feasible trace, indeed the dual basis
of $b$ relative to standard feasible trace $\zeta$ is
$\frac{b^*}{|b||B^+|}$ for $b\in B$. Part (ii) follows from
equality
$\varepsilon_{\phi}\varepsilon_{\psi}=\delta_{\phi\psi}\varepsilon_{\phi}$
by replacing $b^*$ by $1_A$. \hfill\bull
\end{proof}
\begin{remark}\label{251007r}
From (\ref{23100701}) one can see that in commutative case,
$\zeta_{\chi}$ is the coefficient of $1_A$ in the linear
combination of $|B^+|\varepsilon_{\chi}$ in terms of the basis
elements of $B$.
\end{remark}

Let $(A,B)$ be a table algebra and $C\in \mathcal{C}(B)$. Set
$e=|C^+|^{-1}C^+$. Then $e$ is an idempotent for the table algebra
$A$ and the subalgebra $eAe$ denoted by $\h$, is equal to the
quotient table algebra $(A/C,B/C)$ modulo $C$, see \cite{Ar}. Let
$\zeta$ be the standard feasible trace of the table algebra
$(A,B)$. Then $\zeta|_{\h}$ is the standard feasible trace for
$(A/C,B/C)$. Indeed, assume that $T\subseteq B$ be a complete set
of representatives of $C$-double cosets of $A$. Then $B=\bigcup
_{b\in T} CbC$ and $|C^+|^{-1}|B^+|=\sum_{b\in
T}|b/C|$. Since
$$
\zeta|_{\h}(b/C)= \css
|C^+|^{-1}|B^+|,  &\text{if $b=1_A$}\\
0, &  \text{if $b\neq 1_A$}\\
\ecss
$$
it follows that $\zeta|_{\h}$ is the standard feasible trace for
$(A/C,B/C)$. Thus we proved the following lemma:

\begin{lemma}\label{280208}
Let $(A,B)$ be a table algebra with the standard feasible trace
$\zeta$ and let $C\in \mathcal{C}(B)$. Then $\zeta|_{\h}$ is the
standard feasible trace of $(A/C,B/C)$. \hfill\bull
\end{lemma}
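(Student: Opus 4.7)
The plan is to verify directly, on basis elements, that $\zeta|_{\h}$ satisfies the defining property of the standard feasible trace of the quotient table algebra. By the definition given at the beginning of section \ref{2309073} applied to $(A/C,B/C)$, the standard feasible trace is the unique linear map $\h \to \C$ sending $1_{A/C}$ to $|(B/C)^+|$ and every other element of $B/C$ to $0$. Since the $\{b/C : b\in T\}$ with $T$ a complete set of $C$-double coset representatives is a basis of $\h$, it suffices to evaluate $\zeta$ at each of these.

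First I would rewrite $b/C = |C^+|^{-1}\sum_{x\in CbC} x$ and use linearity together with $\zeta(x)=\delta_{1_A x}|B^+|$ to get
\[
\zeta(b/C) \;=\; |C^+|^{-1}\sum_{x\in CbC}\zeta(x) \;=\; |C^+|^{-1}|B^+|\cdot[\,1_A\in CbC\,].
\]
Since $1_A\in C$, the double coset $CbC$ contains $1_A$ if and only if $b\in C$, equivalently $b=1_A$ (as a chosen representative of $C$-double cosets). Hence $\zeta(b/C)=0$ for $b\ne 1_A$ and $\zeta(1_A/C)=|C^+|^{-1}|B^+|$.

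It remains to identify $|C^+|^{-1}|B^+|$ with $|(B/C)^+|$. This is the combinatorial identity noted by the authors: the $C$-double cosets $\{CbC : b\in T\}$ partition $B$, so
\[
|B^+| \;=\; \sum_{b\in T}\sum_{x\in CbC}|x| \;=\; \sum_{b\in T}|C^+|\cdot|b/C|,
\]
where the last equality uses the $*$-linearity of the degree map applied to $b/C=|C^+|^{-1}(CbC)^+$. Dividing by $|C^+|$ gives $|C^+|^{-1}|B^+| = \sum_{b\in T}|b/C| = |(B/C)^+|$, completing the identification.

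I do not foresee a real obstacle: once the realization $\h=A/C$ with basis $\{b/C\}$ is in place (guaranteed by Theorem \ref{Arad} and the discussion preceding the lemma), the verification is a two-line computation on basis elements together with the partition/degree identity above. The only point worth being careful about is distinguishing the value $\zeta(1_A/C)$ from $\zeta(1_A)=|B^+|$; the factor $|C^+|^{-1}$ arises precisely because $1_A/C = |C^+|^{-1}C^+$ differs from $1_A$ in the ambient algebra, and this is what makes the restriction come out to be the correct standard feasible trace of the quotient rather than a scalar multiple of it.
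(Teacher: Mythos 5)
Your proposal is correct and follows essentially the same route as the paper: evaluate $\zeta$ on the basis $\{b/C\}$ of $\h$, observe that only $b=1_A$ picks up a nonzero value $|C^+|^{-1}|B^+|$, and identify that quantity with $|(B/C)^+|=\sum_{b\in T}|b/C|$ via the double-coset partition of $B$. You simply fill in the details (the indicator argument for $1_A\in CbC$ and the degree computation) that the paper leaves implicit.
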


In the following we will show that the standard feasible
multiplicities of the characters of a table algebra and its
quotient are the same. For this, we need to observe a relationship
between the characters of a table algebra and its quotient. The next
three theorems and corollaries are proved for adjacency
algebra of an association scheme, see \cite{HAN}. Now we generalize
them for table algebras.

\begin{theorem}\label{240108}
Let $(A,B)$ be a table algebra and let $P=\{\varepsilon_{\chi} |~
\chi \in \Irr(A) \}$ be the set of central primitive idempotents
of $(A,B)$. Then $P_C=\{e\varepsilon_{\chi}|~ \chi \in \Irr(A)
\}\setminus\{0\}$ is the set of central primitive idempotents of
the quotient table algebra $(A/C,B/C)$ where $C\in \mathcal{C}(B)$
and $e=|C^+|^{-1}C^+$.
\end{theorem}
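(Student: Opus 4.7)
The plan is to exploit the fact, already noted in the text preceding Lemma~\ref{280208}, that the quotient table algebra $(A/C,B/C)$ can be identified with the corner algebra $eAe$, whose identity element is $e$. Since $A$ is semisimple with central primitive idempotents $\{\varepsilon_\chi\mid\chi\in\Irr(A)\}$, the theorem will follow from a standard argument: for a central idempotent decomposition $1_A=\sum_\chi\varepsilon_\chi$, cutting by a (possibly non-central) idempotent $e$ produces the central primitive idempotent decomposition of $eAe$, after discarding the zero contributions.

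First I would record that each $e\varepsilon_\chi$ lies in $eAe$: since $\varepsilon_\chi$ is central in $A$ we have $e\varepsilon_\chi=\varepsilon_\chi e$ and hence $e\varepsilon_\chi=e\varepsilon_\chi e$. The same centrality gives $(e\varepsilon_\chi)(e\varepsilon_\psi)=e^2\varepsilon_\chi\varepsilon_\psi=\delta_{\chi\psi}e\varepsilon_\chi$, so the $e\varepsilon_\chi$ are pairwise orthogonal idempotents. Summing over $\Irr(A)$ yields $\sum_\chi e\varepsilon_\chi=e\sum_\chi\varepsilon_\chi=e\cdot 1_A=e$, the identity of $eAe$, and removing the vanishing terms preserves this identity. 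To see that $e\varepsilon_\chi$ is central in $eAe$, for any $eae\in eAe$ compute
\[
(eae)(e\varepsilon_\chi)=eae\varepsilon_\chi=e\varepsilon_\chi ae=e\varepsilon_\chi eae=(e\varepsilon_\chi)(eae),
\]
using only that $\varepsilon_\chi$ is central in $A$ and $e^2=e$.

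It remains to establish primitivity, which is the one nontrivial point. Using the block decomposition $A=\bigoplus_\chi A\varepsilon_\chi$ with each $A\varepsilon_\chi$ simple (isomorphic to a full matrix algebra over $\C$), cutting on both sides by $e$ gives
\[
eAe=\bigoplus_{\chi\in\Irr(A)}e(A\varepsilon_\chi)e=\bigoplus_{\chi\in\Irr(A)}(e\varepsilon_\chi)(eAe)(e\varepsilon_\chi).
\]
Each summand $(e\varepsilon_\chi)(eAe)(e\varepsilon_\chi)=(e\varepsilon_\chi)A(e\varepsilon_\chi)$ is a corner of the simple algebra $A\varepsilon_\chi$, hence either zero or itself simple. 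Consequently the nonzero $e\varepsilon_\chi$ are precisely the central primitive idempotents of $eAe=A/C$, which is the claim.

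The only step that requires a moment of care is the primitivity argument, since one must recognize that a nonzero corner $fSf$ of a simple algebra $S$ is again simple; this is a standard fact but is the substantive input of the proof. Everything else is a direct manipulation relying on the centrality of the $\varepsilon_\chi$ in $A$, which makes $e$ and the $\varepsilon_\chi$ commute and lets all of the verifications go through immediately.
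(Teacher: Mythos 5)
Your proof is correct, and it reaches the same structural conclusion as the paper's proof by a somewhat different route. The paper also observes that $e=\sum_\chi e\varepsilon_\chi$ is the identity of $\h=eAe$ and then shows each nonzero $e\varepsilon_\chi$ is a central primitive idempotent; but for the crucial simplicity step it identifies $\varepsilon_\chi A$ with $\mathrm{End}_A(V)$ for $V=\varepsilon_\chi A$ and invokes Nagao--Tsushima (Theorem 5.4) to get $e\varepsilon_\chi\h\cong \mathrm{End}_A(TV)$, which is simple. You instead argue directly that $(e\varepsilon_\chi)(eAe)(e\varepsilon_\chi)$ is a nonzero corner $fSf$ of the simple block $S=A\varepsilon_\chi$ and hence simple; this is the standard one-line ideal argument (a nonzero two-sided ideal $I\trianglelefteq fSf$ satisfies $SIS=S$, whence $fSf=fSIS f=(fSf)I(fSf)\subseteq I$), and it makes the proof self-contained, at the cost of having to check explicitly the orthogonality, centrality in $eAe$, and the decomposition $eAe=\bigoplus_\chi (e\varepsilon_\chi)A(e\varepsilon_\chi)$ --- all of which you do correctly using only that the $\varepsilon_\chi$ are central. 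The paper's citation-based route buys brevity and connects the statement to the condensation functor $V\mapsto eV$ (which it then exploits in Corollaries \ref{corresond} and \ref{correspond2}); your route buys elementarity and avoids the external reference. Either is acceptable; the one substantive input in your version, that a nonzero corner of a simple algebra is simple, should be justified (or at least cited) rather than merely asserted, but it is indeed standard.
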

\begin{proof}
Suppose that $\varepsilon_{\chi} \in P$ such that
$e\varepsilon_{\chi} \neq 0$. Then the algebra $\varepsilon_{\chi}
A$ is isomorphic to $\rm{End}_{A}(V)$ where $V=
\varepsilon_{\chi}A$. Let $T$ be the image of the idempotent
$e\varepsilon_{\chi}$ with respect to this isomorphism. From
\cite[Theorem 5.4]{Na}, it follows that the algebra
$e\varepsilon_{\chi}\h$ is isomorphic to $\rm{End}_{A}(TV)$. Since
the latter algebra is simple, so $e\varepsilon_\chi$ is a central
primitive idempotent of the algebra $\h$. On the other hand, since
$e$ is the unit element of $\h$ and $e=\sum e\varepsilon_\chi$
where $e\varepsilon_\chi$ runs over the set $
\{e\varepsilon_{\chi}|~ \chi \in \Irr(A) \}\setminus\{0\}$, we
conclude that $P_C$ is the set of all central primitive
idempotents of the quotient table algebra $(A/C,B/C)$ and we are done.\hfill\bull
\end{proof}
\begin{corollary}
\label{corresond} There is a one to one correspondence between
$\{\chi\in \Irr(A) |~ \chi|_\h \neq 0\}$ and $\rm{Irr}(\h)$ by the
map $\chi\rightarrow \chi|_\h $.
\end{corollary}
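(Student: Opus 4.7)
The plan is to bootstrap directly off Theorem~\ref{240108}, which identifies the central primitive idempotents of $\h$ as precisely the nonzero elements $e\varepsilon_\chi$. That statement already gives a bijection $\chi\leftrightarrow\chi'$ between $\{\chi\in\Irr(A):e\varepsilon_\chi\ne 0\}$ and $\Irr(\h)$, where $\chi'$ is the unique irreducible character of $\h$ whose primitive idempotent is $e\varepsilon_\chi$. The only things left to verify are that this bijection coincides with the map $\chi\mapsto\chi|_\h$, and that the index set $\{\chi:e\varepsilon_\chi\ne 0\}$ agrees with $\{\chi:\chi|_\h\ne 0\}$.

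For the first identification I would realize $\chi$ as the character of a simple $A$-module $V_\chi$ and apply the idempotent-truncation functor $V_\chi\mapsto eV_\chi$, which by \cite[Theorem 5.4]{Na} yields either $0$ or a simple $\h$-module, and in fact produces every simple $\h$-module this way. The key observation is that for $x\in\h$ we have $x=exe$, so as an operator on $V_\chi$ the element $x$ annihilates a complement of $eV_\chi$; hence $\chi(x)=\mathrm{tr}_{V_\chi}(x)=\mathrm{tr}_{eV_\chi}(x)$, which is exactly the character of $\h$ afforded by $eV_\chi$. This shows that $\chi|_\h$ \emph{is} the character of $\h$ associated via Theorem~\ref{240108} to the primitive idempotent $e\varepsilon_\chi$ whenever $eV_\chi\ne 0$.

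The remaining equivalence $\chi|_\h\ne 0 \Longleftrightarrow e\varepsilon_\chi\ne 0$ is then almost automatic: if $e\varepsilon_\chi\ne 0$ then $\chi|_\h(e)=\chi(e)=\dim eV_\chi\ne 0$; conversely, if $\chi|_\h\ne 0$ then $eV_\chi\ne 0$, so $e$ acts nontrivially on the simple $A$-module $V_\chi$ on which $\varepsilon_\chi$ acts as the identity, forcing $e\varepsilon_\chi\ne 0$. The main conceptual hurdle is simply recognizing that the idempotent-truncation correspondence does all the heavy lifting, and that the a priori distinct notions ``restriction of the character function $\chi$ to $\h$'' and ``character of the truncated module $eV_\chi$'' coincide. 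Once this is reconciled, injectivity and surjectivity of $\chi\mapsto\chi|_\h$ are immediate consequences of Theorem~\ref{240108}.
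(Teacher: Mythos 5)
Your proposal is correct and follows the same route as the paper, which simply declares the corollary an immediate consequence of Theorem~\ref{240108}; you have merely supplied the details (the identification of $\chi|_\h$ with the character of the condensed module $eV_\chi$ via $x=exe$, and the equivalence $\chi|_\h\neq 0\Leftrightarrow e\varepsilon_\chi\neq 0$) that the authors leave implicit.
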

\begin{proof} This is an immediate consequence of Theorem
\ref{240108}. \hfill \bull
\end{proof}
\begin{corollary}\label{correspond2}
Let $(A,B)$ be a table algebra and $C\in\mathcal{C}(B)$. Then
every irreducible $A$-module $V$ is an irreducible $\h$-module
iff $\dim_\C(eV)\neq 0$, where $e=|C^+|^{-1}C^+$.
\end{corollary}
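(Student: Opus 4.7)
The plan is to translate the character-theoretic Corollary \ref{corresond} into module-theoretic language by exhibiting the natural $\h$-module associated to the simple $A$-module $V$ and matching its character with $\chi|_\h$, where $\chi$ is the $A$-character afforded by $V$.

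First I would note that since $e = |C^+|^{-1}C^+$ is an idempotent of $A$, the subspace $eV$ carries a natural left $\h$-module structure (with $e$ acting as its identity on $eV$), and $\dim_\C(eV) = \chi(e)$, because the trace of an idempotent on a module equals the dimension of its image. Next I would identify the $\h$-character of $eV$ with $\chi|_\h$: writing $V = eV \oplus (1-e)V$ as a direct sum of subspaces, every $x = exe \in \h$ annihilates $(1-e)V$ and sends $V$ into $eV$, so the matrix of $x$ acting on $V$ is block-triangular with a zero block on $(1-e)V$; hence the trace of $x$ on $V$ equals the trace of $x$ on $eV$, showing that $\chi|_\h$ is precisely the character of the $\h$-module $eV$.

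With these identifications in place, the corollary reduces to Corollary \ref{corresond}. The forward direction is immediate because an irreducible module is nonzero by convention. Conversely, if $\dim_\C(eV) \neq 0$, then $\chi(e) \neq 0$, so $\chi|_\h \neq 0$, and Corollary \ref{corresond} yields that $\chi|_\h$ is an irreducible character of $\h$. Since $\h = eAe$ is a semisimple algebra (being a corner of the semisimple algebra $A$), an $\h$-module whose character is irreducible is itself simple, so $eV$ is an irreducible $\h$-module. The main obstacle I expect is the bookkeeping needed to equate $\chi|_\h$ with the $\h$-character of $eV$; once that identification is in hand, everything else follows at once from Theorem \ref{240108} and Corollary \ref{corresond}.
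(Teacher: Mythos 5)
Your proposal is correct and follows essentially the same route as the paper: both rest on the identity $\chi(e)=\dim_\C(eV)$ (trace/rank of an idempotent) combined with Corollary \ref{corresond}. The only difference is that you spell out the identification of $\chi|_\h$ with the character of the $\h$-module $eV$ via the decomposition $V=eV\oplus(1-e)V$, a step the paper leaves implicit.
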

\begin{proof}
Let $V$ be an irreducible $A$-module and let $D$ be a matrix
representation of $A$ defined by $V$. Since $e$ is an idempotent,
rank $D(e)=\chi(e)$, where $\chi$ is the irreducible character afforded by
$D$. On the other hand, rank $D(e)=\dim_\C(eV)$. Hence
\begin{eqnarray}\label{correspond3}
\chi(e)=\dim_\C(eV).
\end{eqnarray}

We first suppose that $V$ is an irreducible $\h$-module. Then
$\chi|_\h\neq 0$, and so $\chi(e)\neq 0$. Thus equality
(\ref{correspond3}) implies that $\dim_\C(eV)\neq0$.

Conversely, let $\dim_\C(eV)\neq0$. From equality
(\ref{correspond3}) and Corollary \ref{corresond} we deduce that
$\chi|_\h$ is an irreducible character of $\h$. Thus $V$ is an
irreducible $\h$-module and we are done. \hfill\bull
\end{proof}
\vspace{.4cm} The theorem below gives the relationship between the
standard feasible multiplicity of a character of a table algebra $(A,B)$ and
the quotient table algebra $(A/C,B/C)$.

\begin{theorem}
The standard feasible multiplicity of $\chi|_{\h}$ is equal to
that of $\chi$ if  $\chi|_{\h} \neq 0$.
\end{theorem}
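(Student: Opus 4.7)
The plan is to combine Lemma \ref{280208} with the bijection of Corollary \ref{corresond} and exploit the uniqueness of the decomposition of a non-degenerate feasible trace into irreducible characters.

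First I would write down the two decompositions of $\zeta|_{\h}$ in the space of class functions on $\h$. On the one hand, starting from
\[
\zeta = \sum_{\chi \in \Irr(A)} \zeta_{\chi}\,\chi
\]
I would restrict to $\h = eAe$ to obtain
\[
\zeta|_{\h} = \sum_{\chi \in \Irr(A)} \zeta_{\chi}\,\chi|_{\h} \; = \sum_{\substack{\chi \in \Irr(A) \\ \chi|_{\h} \neq 0}} \zeta_{\chi}\,\chi|_{\h},
\]
where the second equality just discards the zero summands. On the other hand, Lemma \ref{280208} asserts that $\zeta|_{\h}$ is itself the standard feasible trace of the quotient table algebra $(A/C, B/C)$, so by definition
\[
\zeta|_{\h} = \sum_{\psi \in \Irr(\h)} \zeta^{\h}_{\psi}\,\psi,
\]
where $\zeta^{\h}_{\psi}$ denotes the standard feasible multiplicity of $\psi$ in $\h$.

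Next I would invoke Corollary \ref{corresond}, which provides the bijection
\[
\{\chi \in \Irr(A) : \chi|_{\h} \neq 0\} \longrightarrow \Irr(\h), \qquad \chi \longmapsto \chi|_{\h}.
\]
This lets me rewrite the first decomposition in the index set $\Irr(\h)$ and conclude that the two expressions agree term-by-term, provided I know that the irreducible characters of $\h$ are linearly independent. Since $\h$ is semisimple (it inherits semisimplicity through $e A e$; equivalently, as a table algebra it has trivial Jacobson radical as remarked in the preliminaries), its irreducible characters are linearly independent in $\mathrm{Hom}_{\C}(\h,\C)$. Comparing coefficients then yields $\zeta_{\chi} = \zeta^{\h}_{\chi|_{\h}}$, which is precisely the claim.

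The only real subtlety, and what I would pay attention to, is making sure the bijection of Corollary \ref{corresond} is actually being used to re-index the first sum without double-counting, i.e., that distinct $\chi \in \Irr(A)$ with $\chi|_{\h} \neq 0$ produce distinct irreducible characters of $\h$ (injectivity), and that every $\psi \in \Irr(\h)$ arises this way (surjectivity). Both are recorded in Corollary \ref{corresond}, so the proof reduces essentially to a one-line application of the uniqueness of the expansion of a feasible trace in the basis of irreducible characters.
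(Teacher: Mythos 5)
Your proof is correct and follows essentially the same route as the paper: both rest on Lemma \ref{280208} together with the character correspondence coming from Theorem \ref{240108} and Corollary \ref{corresond}. The only cosmetic difference is that the paper extracts the coefficient of $\chi|_{\h}$ by evaluating the two decompositions of $\zeta|_{\h}$ at the idempotent $e\varepsilon_{\chi}$, whereas you compare coefficients via linear independence of the irreducible characters of $\h$ --- two phrasings of the same comparison.
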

\begin{proof}
Let $\{\varepsilon_{\chi}~|~\chi \in \rm{Irr}(A)\}$ be the set of
central primitive idempotents of $A$. Then
$\zeta(e\varepsilon_\chi)=\zeta_{\chi}\chi(e\varepsilon_\chi)$. On
the other hand, from Theorem \ref{240108} we conclude that
$\zeta(e\varepsilon_\chi)=\zeta|_{\h}(e\varepsilon_\chi)$. But
from Lemma \ref{280208} it follows that
$\zeta|_{\h}(e\varepsilon_\chi)= \zeta_{\chi|_{\h}}{\chi|_{\h}}
(e\varepsilon_\chi)$, where $\zeta_{\chi|_{\h}}$ is the standard
feasible multiplicity of $\chi|_{\h}$.
 This implies that $\zeta_\chi\chi(e\varepsilon_\chi)=
\zeta_{\chi|_{\h}}{\chi|_{\h}}(e\varepsilon_\chi)$. Thus
$\zeta_\chi=\zeta_{\chi|_{\h}}$, as claimed.\hfill\bull
\end{proof}
\vspace{.4cm} Suppose that $(A,B)$ is a C-algebra and $\rho\in
\rm{Hom_\C(A,\C)}$ such that $\rho(b)=|b|$. Then $\rho$ is an
irreducible character of $A$, which is called the {\it principle
character} of $(A,B)$. From (\ref{23100702}) by replacing $\phi$
and $\psi$ by $\rho$ we conclude that $\zeta_{\rho}=1$. Moreover,
if $(A,B)$ is a commutative C-algebra, then \cite[Corollary
5.6]{Ban} shows that $\zeta_\chi > 0$. In the following lemma we
give a lower bound for the standard feasible multiplicities of
the characters of a table algebra.
\begin{lemma}
Let $(A,B)$ be a table algebra. Then $|\zeta_{\chi}| \geq
\chi(1_A)^{-1}$, for every $\chi\in \Irr(A)$. In particular, if
$(A,B)$ is commutative table algebra then $|\zeta_{\chi}| \geq 1$.
\end{lemma}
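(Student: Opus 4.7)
The plan is to apply the orthogonality relation (\ref{23100702}) with $\phi=\psi=\chi$. Because $|b^*|=|b|$ and each irreducible representation of $A$ can be chosen unitary with respect to the Hermitian positive definite form $\langle x,y\rangle=t(xy^*)$ (so that $\chi(b^*)=\overline{\chi(b)}$), the relation collapses to
$$
\frac{\chi(1_A)}{\zeta_\chi}=\frac{1}{|B^+|}\sum_{b\in B}\frac{|\chi(b)|^2}{|b|}.
$$
The right-hand side is a strictly positive real number, so $\zeta_\chi$ is itself positive real and the absolute value sign in the statement is in fact redundant in the table algebra setting.

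The heart of the argument is the spectral bound $|\chi(b)|\leq\chi(1_A)\,|b|$ for every $b\in B$ and every $\chi\in\Irr(A)$. I would derive it via Perron--Frobenius applied to left multiplication. Let $L_b$ be the matrix of left multiplication by $b$ on $A$ in the basis $B$; its entries $\lambda_{bcd}$ are nonnegative because $(A,B)$ is a table algebra. Applying the degree map (condition (IV)) to the expansion $bc=\sum_d\lambda_{bcd}d$ gives $\sum_d|d|\lambda_{bcd}=|b|\,|c|$, which says that the strictly positive vector $(|c|)_{c\in B}$ is a left eigenvector of $L_b$ with eigenvalue $|b|$. A standard Perron--Frobenius argument then forces the spectral radius of $L_b$ to equal $|b|$. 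Since the eigenvalues of $D(b)$ in any irreducible representation $D$ are eigenvalues of $L_b$, each is bounded in absolute value by $|b|$, and summing the $\chi(1_A)$ of them yields $|\chi(b)|\leq\chi(1_A)\,|b|$.

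Substituting this bound into the previous identity produces
$$
\frac{\chi(1_A)}{\zeta_\chi}\leq \frac{1}{|B^+|}\sum_{b\in B}\frac{\chi(1_A)^2\,|b|^2}{|b|}=\chi(1_A)^2,
$$
which rearranges to $|\zeta_\chi|=\zeta_\chi\geq\chi(1_A)^{-1}$. In the commutative case every irreducible character of $A$ is one-dimensional, so $\chi(1_A)=1$ and the estimate becomes $|\zeta_\chi|\geq 1$.

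The principal obstacle is the spectral bound itself. The orthogonality identity and the arithmetic around it are valid in any C-algebra, but the Perron--Frobenius input (existence of a positive left eigenvector of $L_b$ and nonnegativity of the entries of $L_b$) is what singles out table algebras; this is precisely where the hypothesis of nonnegative structure constants is used. An alternative path, via the Cauchy--Schwarz estimate $|\chi(b)|^2\leq\chi(1_A)\chi(b^*b)$ together with the expansion $b^*b=|b|\cdot 1_A+\sum_{c\neq 1_A}\lambda_{b^*bc}c$ with nonnegative coefficients, arrives at the same conclusion and likewise relies on positivity of structure constants.
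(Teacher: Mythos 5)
Your proposal is correct, and its overall skeleton coincides with the paper's: both arguments rest on the spectral bound $|\chi(b)|\le \chi(1_A)\,|b|$ for $b\in B$, which is then fed into the orthogonality relation (3.2) with $\phi=\psi=\chi$ to give $\chi(1_A)/|\zeta_\chi|\le \chi(1_A)^2$. The genuine difference is in how that bound is obtained. The paper takes $\lambda$ a root of the minimal polynomial of $a$, produces a nonzero $x\in A$ with $ax=\lambda x$, and invokes the inequality $|\langle ax,x\rangle|\le |a|\langle x,x\rangle$ from Arad--Fisman--Muzychuk to get $|\lambda|\le|a|$, then uses ${\rm Spec}(D(a))\subseteq{\rm Spec}(a)$. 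You instead run a Perron--Frobenius argument on the nonnegative left-multiplication matrix $L_b$, using the strictly positive left eigenvector $(|c|)_{c\in B}$ with eigenvalue $|b|$ to pin down the spectral radius, and then use semisimplicity to embed the spectrum of each irreducible $D(b)$ into that of $L_b$; both routes isolate the nonnegativity of the structure constants as the essential hypothesis. Two further remarks: (i) your use of unitarizability to write the left side of (3.2) as $\frac{1}{|B^+|}\sum_b|\chi(b)|^2/|b|$ actually proves the slightly stronger statement that $\zeta_\chi$ is real and positive, whereas the paper only takes absolute values of (3.2), which is marginally more economical since it avoids justifying $\chi(b^*)=\overline{\chi(b)}$; (ii) your parenthetical ``alternative path'' via $|\chi(b)|^2\le\chi(1_A)\chi(b^*b)$ is not actually self-contained as stated, since bounding $\chi(b^*b)$ would again require controlling the values $\chi(c)$ for $c\ne 1_A$ --- but this is only an aside and does not affect the validity of your main argument.
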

\begin{proof}
We first claim that $|\chi(a)|\leq |a|\chi(1)$, where $a\in B$ and
$\chi$ is a character of $A$. To do this, let $D$ be a
representation of $A$ which affords character $\chi$ and let $a\in
A$. Suppose that $m_a(x)$ is the minimal polynomial of $a$ and
Spec$(a)$ is the set of all roots of $m_a(x)$. Let $\lambda\in
{\rm{Spec}}(a)$. Then $a-\lambda.1_A$ can not be invertible, see
\cite[Corollary 2.25]{DF}. So there exists a non zero element
$x\in A$ such that $(a-\lambda.1_A)x=0$ or equivalently
$ax=\lambda x$. But by \cite[Proposition 2.3]{Ar2} we have $|\langle ax,x
\rangle|\leq |a|\langle x,x \rangle$ and so the latter equality
implies that $|\lambda \langle x,x \rangle |\leq |a|\langle
x,x\rangle $. Therefore $|\lambda|\leq |a|$. This fact along with
the obvious inclusion Spec$(D(a))\subseteq {\rm{Spec}}(a)$ prove
the claim. Now the result follows by applying the degree map
$|\cdot|$ on the both sides of the equation (\ref{23100702}).

The second statement is an immediate consequence of the first one,
since $\chi(1_A) = 1 $ for commutative case. \hfill\bull
\end{proof}

\begin{lemma}\label{251107}
The set of standard feasible multiplicities of two isomorphic
C-algebras are the same.
\end{lemma}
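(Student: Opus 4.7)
The plan is to show that if $f:(A,B)\to (A',B')$ is a C-algebra isomorphism, then $f$ pulls the standard feasible trace of $(A',B')$ back to that of $(A,B)$, and then to use the uniqueness of the expansion of a non-degenerate feasible trace in terms of irreducible characters.

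First I would verify that $f$ preserves the degree map, and consequently that $|B^+|=|(B')^+|$. Since $f$ is an $*$-algebra isomorphism with $f(B)=B'$, applying $f$ to the relation $ab=\sum_{c\in B}\lambda_{abc}c$ and using that $f(c)\in B'$ shows that the structure constants of $(A,B)$ and $(A',B')$ agree under the identification $b\mapsto f(b)$. Condition (III) then gives $|f(b)|=\lambda_{f(b)f(b)^{*}1_{A'}}=\lambda_{bb^{*}1_A}=|b|$, and summing yields $|B^+|=|(B')^+|$. Since $f(1_A)=1_{A'}$, the definition of the standard feasible trace immediately gives
\[
\zeta'(f(b))=\delta_{1_{A'}\,f(b)}\,|(B')^+|=\delta_{1_A\,b}\,|B^+|=\zeta(b)
\]
for every $b\in B$, hence $\zeta=\zeta'\circ f$ on all of $A$ by linearity.

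Next I would exploit the natural bijection $\Irr(A')\to\Irr(A)$, $\chi'\mapsto\chi'\circ f$, induced by the algebra isomorphism $f$. Writing the standard feasible trace of $(A',B')$ as $\zeta'=\sum_{\chi'\in\Irr(A')}\zeta'_{\chi'}\chi'$ and composing with $f$, we obtain
\[
\zeta=\zeta'\circ f=\sum_{\chi'\in\Irr(A')}\zeta'_{\chi'}\,(\chi'\circ f)=\sum_{\chi\in\Irr(A)}\zeta'_{\chi\circ f^{-1}}\,\chi .
\]
Comparing this with the unique expansion $\zeta=\sum_{\chi\in\Irr(A)}\zeta_\chi\chi$ (uniqueness follows from the linear independence of distinct irreducible characters of the semisimple algebra $A$) yields $\zeta_\chi=\zeta'_{\chi\circ f^{-1}}$ for every $\chi\in\Irr(A)$. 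In particular the sets $\{\zeta_\chi:\chi\in\Irr(A)\}$ and $\{\zeta'_{\chi'}:\chi'\in\Irr(A')\}$ coincide, which is the desired conclusion.

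There is no real obstacle here; the only subtle point worth being careful about is verifying that $f$ preserves degrees (so that $|B^+|=|(B')^+|$) directly from the C-algebra axioms rather than treating it as automatic, since without that the equality $\zeta=\zeta'\circ f$ on basis elements would fail. Once that is in place the proof reduces to the standard fact that the coefficients in the expansion of a trace in terms of irreducible characters are invariants of the algebra.
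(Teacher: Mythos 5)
Your proof is correct, but it takes a different route from the paper's. The paper works with the central primitive idempotents: it observes that $f$ carries the set $P=\{\varepsilon_\chi\}$ of central primitive idempotents of $A$ onto that of $A'$, so that $(\varepsilon_\psi)^f=\varepsilon_{\chi^f}$ for suitable $\psi$, and then extracts the multiplicities by comparing the coefficient of $1_{A'}$ on both sides using the explicit expansion of $\varepsilon_\chi$ from Lemma \ref{130308}(i) (via Remark \ref{251007r}), arriving at $\frac{\psi(1)}{|B^+|}\zeta_\psi=\frac{\chi(1)}{|B'^+|}\zeta'_{\chi^f}$. You instead bypass the idempotents entirely: you check that $f$ preserves degrees, hence $|B^+|=|(B')^+|$ and $\zeta=\zeta'\circ f$ on the nose, and then read off $\zeta_\chi=\zeta'_{\chi\circ f^{-1}}$ from the linear independence of irreducible characters. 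Your route is more self-contained and arguably cleaner: it makes the bijection $\chi\mapsto\chi\circ f^{-1}$ between the two character sets explicit, and it supplies a justification (degree preservation) for the equality $|B^+|=|(B')^+|$, which the paper's final step uses only implicitly. The paper's route has the advantage of recycling machinery already established earlier in Section \ref{2309073}, but it yields a slightly less transparent matching between $\psi$ and $\chi$. Both arguments are valid and prove the same (in fact slightly stronger, multiplicity-matching) statement.
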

\begin{proof}
Let $(A,B)$ and $(A',B')$ be two C-algebras and $ f :(A,B) \to
(A',B')$ be an isomorphism. Let  $\zeta$ and $\zeta'$ be the
standard feasible traces of $(A,B)$ and $(A',B')$, respectively.
Let $P=\{\varepsilon_\chi\mid\chi \in \Irr(A)\}$ be the set of
central primitive idempotents of $A$. Then it is easily seen that
the set $P'=\{\varepsilon_{\chi^f}\mid\chi \in \Irr(A)\}$ is the
set of central primitive idempotents of $A'$, where $\chi^f(a') =
\chi(f^{-1}(a'))$ and $a'\in A'$. It follows that for any $\chi\in
\Irr(A)$ there exists $\psi \in \Irr(A)$ such that
$(\varepsilon_\psi)^f = \varepsilon_{\chi^f }$, and so $\psi(1) =
\chi(1)$. Therefore, by comparing the coefficient of $1_{A'}$ in
the both sides of the former equality we get
$$
\frac{\psi(1)}{|B^+|}\zeta_\psi  =
\frac{\chi(1)}{|B'^+|}\zeta'_{\chi^f}
$$
where $\zeta_{\psi}$ and $\zeta'_{\chi^f}$ are the standard feasible
multiplicities of $\psi$ and $\chi^f$ with respect to standard feasible
traces $\zeta$ and $\zeta'$, respectively. This implies that
$\zeta_\psi=\zeta'_{\chi^f}$. Therefore the set of standard
feasible multiplicities of the C-algebras $(A,B)$ and $(A',B')$ are
the same, as desired.\hfill\bull
\end{proof}

\section{The standard character }\label{stchar}

Let $X$ be a set with $n$ elements.
According to \cite{WI} a linear subspace $W$ of the algebra
$\Mat_n(\mathbb{C})$, the set of all $n\times n$
matrices with entries in $\mathbb{C}$,
is called a {\it cellular algebra}
on $X$ if $I_n, J_n\in W$; $W$ is closed under the matrix and the Hadamard (componentwise)
multiplications and $W$ is closed under transpose, where $I_n$ is the identity matrix
and $J_n$ is the matrix all of whose entries are ones. For example,
any complex adjacency algebra of an association scheme is a cellular
algebra. Conversely, in the sense of \cite{HIG} for a given cellular
algebra $W$ on a finite set $X$,
there is a coherent configuration on $X$ whose adjacency algebra is $W$.
So cellular algebras and adjacency algebras are equivalent objects, see \cite{WI}.
On the other hand, any cellular algebra is a table algebra but the converse is
not true, see Example {\ref{06100701}}. In this section
we are interested in finding a necessary
and sufficient condition for which a commutative table
algebra becomes a cellular algebra.

Let $(X,G)$ be an association scheme and let $\C G =
\bigoplus_{g\in G} \C\sigma_g$ be the complex adjacency algebra of
$G$. Then the representation of $\C G$ which sends $\sigma_g$ to
itself for every $g\in G$ affords a character $\gamma_G$ which is
called {\it standard character} of $\C G$, see \cite{Zi}.
Moreover, $\gamma_G(\sigma_{1_X}) = |X|$ and $\gamma_G(\sigma_g) =
0$ for $1_X\neq g \in G$ and
\begin{eqnarray}\label{081107st}
\gamma_G = \displaystyle\sum_{\chi \in \Irr(G)}m_{\chi}\chi.
\end{eqnarray}
In this case by setting $A = \C G$ and $B = \{\sigma_g: g\in G\}$,
the pair $(A,B)$ is a C-algebra with the standard feasible trace $\zeta=\gamma_G$
given in (\ref{081107st}). Therefore, the standard feasible
multiplicities $\zeta_\chi = m_{\chi}$ for $\chi \in \Irr(G)$ are
nonnegative integers. \vspace{0.4cm}

In general, we do not know if $\zeta_\chi$'s are nonnegative
integers, or equivalently wether or not $\zeta$ is a character. It
is interesting to find some examples of C-algebras apart from
association schemes, for which $\zeta$ is a character. In example
below we give a commutative table algebra which does not come from
association schemes and for it $\zeta$ is a character. In the case
that the standard feasible trace $\zeta$ of a C-algebra $(A,B)$ is
a character,  by pattern of the theory of
association schemes we call $\zeta$ the {\it standard character} of $(A,B)$.\\
\begin{definition}
We say that a C-algebra has standard character condition, if it
possesses the standard character. We denote by $\S$ the class of
all such C-algebras.
\end{definition}
Clearly  association schemes belong to the class $\S$ and
Example \ref{06100701} below shows that the class $\mathcal{S}$ is
larger than the class of association schemes. But this class is not equal
to the class of integral table algebras, in fact in Example \ref{examnonstand} below
we give an integral table algebra does not belong to $\S$.

For a given strongly regular graph $(X,E)$ with parameters
$(n,k,\lambda,\mu)$ one can find an association scheme $\CC=(X,G)$
where $G = \{1_X,g,h\}$ with structure constants $\lambda_{gg{1_X}}
= k, \lambda_{ggg} = \lambda, \lambda_{ggh} = \mu$. In \cite{Bro}
some of the necessary conditions for the existence of a strongly
regular graph with parameters  $(n,k,\lambda,\mu)$ are given. One
of them is {\it integrality condition}. If we consider adjacency
algebra of association scheme $\CC$, which is a C-algebra $(A,B)$
of dimension 3, then one can see that the standard character condition for
$(A,B)$ is equivalent to integrality condition for the
existence strongly regular graphs with parameters
$(n,k,\lambda,\mu)$, see \cite{Bro}.
\\

In Example \ref{06100701} we will use the definition of a {\it
finite affine plane} in the sense of \cite{BJL}. We recall that
for every finite affine plane $\mathcal{P} = (P,L,I)$,
i.e., $P\cup L$ is finite, there exists an integers $q\geq 2$,
called the order of affine plane such that $|P|=q^2$ and $|L|=q^2+q$,
and each line is incident to exactly $q + 1$ points. Besides,
there are exactly $q+1$ classes $B_1,\ldots ,B_{q+1}$ of pairwise
parallel (nonintersecting) lines, each of which
is of cardinality $q$. \\
\begin{example}
{\rm (cfg. \cite{PR})\label{06100701}
Let $\mathcal{P}$ be a finite affine plane with point set $P$ and line set $L$. Let $\Mat_P(\C)$ be the algebra
of all $|P|\times|P|$ complex matrices whose rows and columns are
indexed by the elements of $P$. Define a $(0,1)$-matrix $r_i\in
\Mat_P(\C)$ by
\[ (r_i)_{u,v} = \left\{ \begin{array}{ll} 1, & \text{if }u\neq v ~~\text{and}~~ l(u,v)\in B_i
\\ 0, & \text{otherwise} \end{array} \right.\]
where $l(u,v)$ is the line incident with both $u$ and $v$. Then
for all $i,j=1,\ldots ,q+1$ we have
\[r_ir_j=\left\{\begin{array}{ll} (q-1)r_0+(q-2)r_i, & \text{if}~~~ i=j\\
\displaystyle\sum_{k\neq {0,i,j}}r_k, & \text{if}~~~{i\neq
j}\end{array}\right.\] where $r_0$ is the identity matrix. So the
set $B=\{r_0,\ldots,r_{q+1}\}$ is a linear base of the subalgebra
A of the algebra $\Mat_P(\C)$ generated by $B$. Then it is easily
seen that $(A,B)$ is a table algebra (with $*$ being the Hermitian
conjugation in $\Mat_P(\C)$). An easy computation shows that the
character table of the table algebra $(A,B)$ is as follows:
$$\begin{array}{c|ccccccc|c}
       & r_0 &r_1  & r_2 & .& .& .&r_{q+1} &\zeta_{\chi_i}\\
     \hline \chi_1 & 1 &q-1  &q-1  & .& .& .& q-1&1 \\
      \chi_2& 1  &q-1 &-1 &.  &. &. &-1&q-1  \\
      \chi_3     & 1 &-1  &q-1  &.  &.  & .&-1 &q-1 \\
                 .& .& .& .& .& .& .& .&.\\
                .& .& .&. &. &. & .& .&.\\
                .& .& .&. & .&. & .& .&.\\
       \chi_{q+2}& 1 &-1  &-1  &  .&  .& .& q-1& q-1\\
    \end{array}
$$
\begin{center}
\text{Table~ (1)}
\end{center}
From Table (1) one can see that the standard feasible
multiplicities of the characters of the table algebra $(A,B)$ are
positive integers. Thus $\zeta $ is a character.  Now we claim
that by a suitable integer $q$ the table algebra $(A,B)$ is not
the complex adjacency algebra of any association scheme. To do so,
suppose on the contrary that the table algebra $(A,B)$ is the
complex adjacency algebra of an association scheme $(X,G)$, where
$G = \{g_0,\ldots, g_{q+1}\}$. Then for each $i, 1\leq i \leq
q+1$, the subset $\{g_0,g_i\}$ of $G$ is a closed subset and so
$E_i = g_0\cup g_i$ is an equivalence relation on $X$. Now let $L$
be the set of all equivalence classes of the equivalence relations
$E_i, 1\leq i \leq q+1$. Then it is easily seen that the sets $X$
and $L$ form an affine plane of order $q$ consisting $X$ as the
set of points and $L$ as the set of lines. But we can choose a
suitable integer $q$ in such a way that there is no affine plane of
degree $q$ (see \cite{BJL}), we get a contradiction. Thus $(A,B)$
can not come from an association scheme.}
\end{example}
\begin{example}\label{examnonstand}{\rm
Let $A$ be a $\C$-linear space with the basis $B=\{1_A,b,c\}$ such
that
\begin{eqnarray}
b^2 &=&2~1_A+ b\nonumber \\
c^2&=&25~1_A+25 b+22c \nonumber \\
bc&=&cb=2 c \nonumber
\end{eqnarray}
Then one can see that the pair $(A,B)$ is an integral table algebra.
By using the orthogonality relation given in Lemma \ref{130308} part (ii)
the character table of $(A,B)$ is as the following:

$$\begin{array}{c|ccc|c}
       & 1_A &b  &c  &\zeta_{\chi_i}\\
     \hline \chi_1 & 1 &2  &25  &1 \\
      \chi_2& 1 &2 &-3 &\frac{25}{3}  \\
      \chi_3& 1 &-1 &0 & \frac{56}{3}\\
    \end{array}
$$
\begin{center}
Table (2)
\end{center}
Thus from Table (2) the standard feasible multiplicities of the characters of
$(A,B)$ are not integers. This shows that $(A,B)\notin \S$.}
\end{example}

\begin{lemma}\label{embed}
Let $(A,B)\in \S$ be a commutative C-algebra. Then any matrix
representation $D$ of $A$ which affords $\zeta$ is faithful.
\end{lemma}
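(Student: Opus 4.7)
The plan is to exploit the fact that the standard feasible multiplicities are all nonzero (indeed, at least $1$ in the commutative case by the previous lemma), so that the representation $D$ contains every irreducible representation of $A$ as a subrepresentation. Faithfulness will then follow from the Wedderburn decomposition of the semisimple commutative algebra $A$.

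First I would recall the decomposition
\[
\zeta = \sum_{\chi \in \Irr(A)} \zeta_\chi \chi,
\]
and observe that by hypothesis $(A,B)\in\S$ the standard feasible trace $\zeta$ is actually a character, afforded by $D$. Since every $\zeta_\chi$ is nonzero (in fact a positive integer here, as $|\zeta_\chi|\geq 1$ in the commutative case), every $\chi\in\Irr(A)$ occurs as a constituent of $\zeta$. Therefore $D$ is equivalent to a direct sum $\bigoplus_{\chi\in\Irr(A)} \zeta_\chi D_\chi$, where $D_\chi$ is the irreducible representation affording $\chi$.

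Next I would compute $\ker D$. An element $a\in A$ lies in $\ker D$ if and only if $D_\chi(a)=0$ for every $\chi\in\Irr(A)$, which is equivalent to saying $a\varepsilon_\chi = 0$ for every central primitive idempotent $\varepsilon_\chi$. Using the Wedderburn decomposition $A = \bigoplus_{\chi\in\Irr(A)} A\varepsilon_\chi$ with $1_A = \sum_\chi \varepsilon_\chi$, we obtain
\[
a = a\cdot 1_A = \sum_{\chi\in\Irr(A)} a\varepsilon_\chi = 0.
\]
Hence $\ker D = \{0\}$ and $D$ is faithful.

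There is really no main obstacle: the content of the argument is just that the \emph{nonvanishing} of every $\zeta_\chi$, combined with semisimplicity of $A$, forces the representation to be faithful. The commutativity hypothesis is used only to have access to the explicit lower bound on $|\zeta_\chi|$ from the preceding lemma and to simplify the Wedderburn picture, but the essential ingredient is the statement $\zeta_\chi\neq 0$ for all $\chi$ already recorded when $\zeta$ was introduced.
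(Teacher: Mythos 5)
Your proof is correct, and it rests on exactly the same key fact as the paper's: every standard feasible multiplicity $\zeta_\chi$ is nonzero, so the representation $D$ affording $\zeta$ contains every irreducible constituent, and faithfulness follows from semisimplicity. The execution differs slightly. The paper works in coordinates: it simultaneously diagonalizes the $D(b_i)$ (each eigenvalue $\chi_j(b_i)$ occurring $\zeta_{\chi_j}$ times), reduces the kernel condition for $a=\sum_i x_i b_i$ to a linear system $MX=0$ with $M=(\zeta_{\chi_i}\chi_i(b_j))$, and invokes the non-singularity of $M$ --- an extra ingredient that ultimately rests on the invertibility of the character table (Lemma \ref{130308}(ii)) together with $\zeta_\chi\neq 0$. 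You instead argue abstractly: $\ker D=\bigcap_{\chi}\ker D_\chi$ because every $\chi$ occurs in $D$, and then $a=\sum_\chi a\varepsilon_\chi=0$ from the Wedderburn decomposition. Your route sidesteps the non-singularity claim entirely, which is a small gain in economy. One minor remark: you do not actually need the lower bound $|\zeta_\chi|\geq 1$ from the preceding lemma, nor even that $\zeta$ is a character in any essential way beyond giving $D$; as you note yourself, the non-degeneracy statement $\zeta_\chi\neq 0$, recorded when $\zeta$ was introduced, already suffices.
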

\begin{proof}
Let $D$ be a matrix representation of $A$ which affords
$\zeta$. Suppose that $a = \sum_{b_i\in B}x_i b_i \in A$ is in the kernel of $D$, so
$D(a) = 0$. Since $A$ is commutative, there is a non-singular
matrix $P$ such that for all $b_i\in B$ the following equality
holds:
$$
PD(b_i)P^{-1} = {\rm diag}(\chi_1(b_i),\underbrace{\chi_2(b_i), \ldots
,\chi_2(b_i)}_{\zeta_{\chi_2}\rm{-times}}, \ldots, \underbrace
{\chi_n(b_i), \ldots ,\chi_n(b_i)}_{\zeta_{\chi_n}\rm{-times}})
$$
where $\Irr(A) = \{\chi_1,\ldots,\chi_n \}$. It follows that
$\sum_{b_i\in B} x_iPD(b_i)P^{-1} =0$. Therefore, $MX = 0$, where $M$
is an $n\times n$  matrix whose $(i,j)$ entry is $\zeta_{\chi_i}\chi_i(b_j)$ and
$X$ is a column matrix whose $i$-th entry is $x_i$. Now since
$M$ is a non-singular matrix, it follows that $X = 0$ which implies that
$a = 0$. This completes the proof of the lemma. \hfill \bull
\end{proof}
\vspace{.4cm}

\begin{remark}
Let $(A,B)\in \S$ be a table algebra and let $D$ be a matrix
representation of $A$ which affords the standard character
$\zeta$. Then one can check that $(D(A),D(B))$ is a table algebra
by defining $D(b)^*=D(b^*)$ and $|D(b)|=|b|$.
\end{remark}
\vspace{.4cm}
We say that a table algebra $(A,B)$ has an adjacency algebra
homomorphic image, if there are an association scheme $(X,G)$ and
a C-algebra epimorphism $T: (A,B) \twoheadrightarrow (\mathbb{C}G,
C)$, where $C = \{\sigma_g : g \in G\}$ is the basis of the
adjacency algebra $\mathbb{C}G$.
\begin{theorem}\label{cel}
Let $(A,B)$ be a table algebra. Then $(A,B)$ has an adjacency
algebra homomorphic image iff $(A,B)\in \S$ and a matrix
representation $D$ which affords $\zeta$ satisfies the following
conditions for any $b\in B$:
\nmrt
\tm{1} $D(b^*)=D(b)^t$.
\tm{2} $D(b)$ is a
$(0,1)$-matrix. \enmrt
\end{theorem}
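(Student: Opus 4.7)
The plan is to prove the two directions separately, with the combinatorial content concentrated in the reverse direction.

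For the forward direction, assume a C-algebra epimorphism $T\colon (A,B)\twoheadrightarrow(\C G,C)$ and let $\iota\colon \C G\hookrightarrow \Mat_{|X|}(\C)$ be the standard embedding, under which each $\sigma_g$ is a $(0,1)$-matrix with $\sigma_g^*=\sigma_g^t$ and whose trace is the standard character $\gamma_G$. Setting $D=\iota\circ T$, condition~(2) is immediate since $D(b)=T(b)\in C$, and condition~(1) follows because $T$ is a $*$-homomorphism. To see that $D$ affords $\zeta$, the key facts are that $T$ preserves degrees on basis elements (since $|\cdot|\circ T$ is a positive one-dimensional character of $A$, hence equals the degree map) and that combining this with condition~(III) applied to $bb^*$ --- and analogously to $b_1b_2^*$ when $T(b_1)=T(b_2)$ --- forces $T$ to be a bijection of bases. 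Then $|X|=|C^+|=|B^+|$ and $T(b)=\sigma_{1_X}\Leftrightarrow b=1_A$, so $\operatorname{tr}D(b)=\gamma_G(T(b))=|B^+|\delta_{b,1_A}=\zeta(b)$, and in particular $(A,B)\in\S$.

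For the reverse direction, let $D$ afford $\zeta$ and satisfy (1), (2). Since $D(1_A)$ is the identity matrix and $\operatorname{tr}D(1_A)=\zeta(1_A)=|B^+|$, the matrix size is $n=|B^+|$; fix $X=\{1,\dots,n\}$ and define $g_b=\{(x,y)\in X\times X : D(b)_{x,y}=1\}$. The key calculation combines (1), linearity of $\zeta$, and condition~(III):
\begin{equation*}
\operatorname{tr}\bigl(D(b)^t D(c)\bigr)=\operatorname{tr}\bigl(D(b^*c)\bigr)=\zeta(b^*c)=\lambda_{b^*c\,1_A}\,|B^+|=\delta_{b,c}\,|b|\,|B^+|.
\end{equation*}
Because the $D(b)$ are $(0,1)$-matrices, the left-hand side equals $|g_b\cap g_c|$, so the sets $g_b$ are pairwise disjoint with $|g_b|=|b|\,|B^+|$. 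Summing over $b\in B$ yields $\sum_b|g_b|=|B^+|^2=n^2$, forcing $\{g_b:b\in B\}$ to partition $X\times X$.

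The remaining association-scheme axioms are now immediate: $g_{1_A}=\{(x,x):x\in X\}$ because $D(1_A)=I_n$; $g_{b^*}=g_b^t$ by condition~(1); and reading the identity $D(b)D(c)=\sum_d\lambda_{bcd}D(d)$ at an entry $(x,y)\in g_d$ identifies $\lambda_{bcd}$ with the number of $z\in X$ satisfying $(x,z)\in g_b$ and $(z,y)\in g_c$. Hence $(X,G)$ with $G=\{g_b:b\in B\}$ is an association scheme. Defining $T\colon A\to\C G$ by $T(b)=\sigma_{g_b}$ and extending linearly yields a $*$-algebra map with $\iota\circ T=D$ and $T(B)=C$, giving the desired C-algebra epimorphism.

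The principal obstacle is the orthogonality computation $\operatorname{tr}(D(b)^t D(c))=\delta_{b,c}|b||B^+|$ in the reverse direction: it simultaneously encodes the vanishing of $\zeta$ on $B\setminus\{1_A\}$ and, via the $(0,1)$-structure, upgrades that vanishing into a genuine combinatorial partition of $X\times X$. In the forward direction, the delicate point is showing $T$ is bijective on bases --- without this, $\gamma_G\circ T$ would only be a scalar multiple of a feasible trace pulled back from the quotient of $A$ by $\ker T$, not $\zeta$ itself.
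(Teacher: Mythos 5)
Your reverse direction is correct and is essentially the paper's argument: where the paper establishes disjointness of the supports of the $D(b)$ by inspecting the $(i,i)$ entry of $D(b)D(c)^t=\sum_d\lambda_{bc^*d}D(d)$, you take the full trace ${\rm tr}(D(b)^tD(c))=\zeta(b^*c)=\delta_{bc}|b|\,|B^+|$; the counting argument forcing $\{g_b\}$ to partition $X\times X$ and the verification of the scheme axioms then proceed identically, and your explicit construction of $T$ only makes precise what the paper leaves implicit in the phrase ``$(D(A),D(B))$ is a cellular algebra.''

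The forward direction, however, contains a genuine gap, located exactly at the point you yourself flag as delicate. You assert that degree preservation together with condition (III) applied to $b_1b_2^*$ ``forces $T$ to be a bijection of bases.'' That computation only shows that if $T(b_1)=T(b_2)$ with $b_1\ne b_2$, then (using nonnegativity of the structure constants) some $c\ne 1_A$ with $|c|=1$ must satisfy $T(c)=\sigma_{1_X}$; it does not exclude this possibility, and in fact it cannot, since the epimorphism $\C[\mathbb{Z}/4\mathbb{Z}]\twoheadrightarrow\C[\mathbb{Z}/2\mathbb{Z}]$ sending a generator $c$ to $\sigma$ is a C-algebra epimorphism onto the adjacency algebra of the thin rank-$2$ scheme that is $2$-to-$1$ on the basis. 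For such a $T$ the representation $\iota\circ T$ has degree $|C^+|<|B^+|$ and affords the pullback of $\gamma_G$ rather than $\zeta$, so your identification ${\rm tr}\,D(b)=\zeta(b)$ breaks down and $(A,B)\in\S$ does not follow from that $T$. To be fair, the paper's own proof has the same defect --- it simply declares that ``$T$ induces a matrix representation $D$ of degree $|B^+|$'' without justification --- so the necessity direction genuinely requires either restricting to epimorphisms that are injective on the basis (which the use of Lemma \ref{embed} in the subsequent corollary suggests is the intended reading) or a separate argument disposing of a nontrivial closed subset of degree-one elements in the ``kernel'' of $T$.
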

\begin{proof}
We first prove the necessity of conditions (1) and (2). Let
$\mathbb{C}G$ be an adjacency algebra which is a homomorphic image
of $(A,B)$. So there exists a C-algebra homomorphism $T$ from $A$
onto $\mathbb{C}G$. Then $T(A)=\mathbb{C}G$ and $T(b^*)=T(b)^t$.
It follows that $|b|=|T(b)|$, for $b\in B$. So $T$ induces a
matrix representation $D$ of degree $|B^+|$ and conditions (1) and
(2) are valid for $D$. Then the character $\chi$ which is
afforded by $D$ has values $|B^+|$ at $1_A$ and 0 at any $b\in
B\setminus \{1_A\}$. This implies that $\chi$ is the standard
character $\zeta$ of $(A,B)$ and so $(A,B)\in \S$, as desired.

Conversely, suppose that $(A,B)\in \S$
and conditions (1) and (2) hold for a
matrix representation $D$ of $A$
which affords the standard character $\zeta$.
Since
$$D(b)D(b)^t = D(b)D(b^*) = |b|D(1_A)+\sum_{1_A\neq
b\in B}\lambda_{bb^*d}D(d)
$$
we conclude that the matrix $D(b)$ contains $|b|$ ones in each
rows and columns. On the other hand, let $b\in B$ such that
$D(b)_{ij}=1$. If $D(c)_{ij}=1$ for some $c\in B$, then the
$(i,i)$ entry of matrix $D(b)D(c)^t=1$. It follows that $b=c$.
Thus the matrices $D(b), b\in B$ are disjoint and the sum of them
is the matrix $J_n$ where $n=|B^+|$. This implies that
$(D(A),D(B))$ is a cellular algebra ( or an adjacency algebra),
as desired.
\hfill\bull
\end{proof}
\begin{corollary}
Let $(A,B)$ be a commutative table algebra. Then $(A,B)$ comes
from an association scheme iff $(A,B)\in\S$ and a matrix
representation $D$ which affords
$\zeta$ satisfies the following conditions for any $b\in B$:
\nmrt
\tm{1}
$D(b^*)=D(b)^t$.
\tm{2} $D(b)$ is a $(0,1)$-matrix.
\enmrt
\end{corollary}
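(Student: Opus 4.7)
The plan is to deduce this corollary directly from Theorem \ref{cel} together with Lemma \ref{embed}. Theorem \ref{cel} already characterises table algebras admitting an adjacency algebra homomorphic image under precisely the stated conditions; saying that $(A,B)$ \emph{comes from} an association scheme is strictly stronger than merely admitting such a homomorphic image, so the extra ingredient needed to upgrade the epimorphism to an isomorphism is the commutativity hypothesis, which enters through Lemma \ref{embed}.

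For the forward direction, if $(A,B)$ is (isomorphic, as a C-algebra, to) the adjacency algebra $(\C G, C)$ of an association scheme $(X,G)$, then in particular the identifying isomorphism is a C-algebra epimorphism onto an adjacency algebra. Applying Theorem \ref{cel} then immediately gives $(A,B)\in\S$ together with a matrix representation $D$ of $A$ affording $\zeta$ and satisfying conditions (1) and (2); concretely, one composes the isomorphism with the defining faithful representation of $\C G$ on $\C^{|X|}$.

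For the converse, assume $(A,B)\in\S$ is commutative and let $D$ be a matrix representation of $A$ affording $\zeta$ for which (1) and (2) hold. Theorem \ref{cel} yields a C-algebra epimorphism $T:(A,B)\twoheadrightarrow(\C G,C)$ where $(X,G)$ is the association scheme constructed there, and inspection of that proof shows $T$ is nothing other than $D$ once one identifies $(D(A),D(B))$ with the adjacency algebra of $(X,G)$. Because $(A,B)$ is commutative, Lemma \ref{embed} guarantees that any matrix representation of $A$ affording $\zeta$ is faithful; hence $\ker T=\ker D=\{0\}$, so the epimorphism $T$ is actually a C-algebra isomorphism, and $(A,B)$ comes from the association scheme $(X,G)$.

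The only non-routine step is the bookkeeping identification of the matrix representation $D$ supplied by Theorem \ref{cel} with the adjacency-algebra epimorphism $T$; once that is noted, Lemma \ref{embed} converts surjectivity into bijectivity and the corollary follows. No further computation is required, as the matrix-theoretic conditions (1) and (2) have already done all the combinatorial work in Theorem \ref{cel}. \hfill\bull
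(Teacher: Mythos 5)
Your proposal is correct and follows exactly the route the paper takes: the paper's own proof is the one-line citation of Theorem \ref{cel} and Lemma \ref{embed}, and your write-up simply fills in the details that are left implicit there (Theorem \ref{cel} produces the epimorphism onto an adjacency algebra, and Lemma \ref{embed} upgrades it to an isomorphism via faithfulness of $D$ in the commutative case).
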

\begin{proof}
This is an immediate consequence from Theorem \ref{cel} and Lemma
\ref{embed}. \hfill\bull
\end{proof}
\vspace{.4cm}
Let $(A,B)$ be a C-algebra. The coordinate-wise multiplication
$\circ$ with respect to the basis $B$ by $b\circ c=\delta_{bc}b$,
for $b,c\in B$ is defined in the sense of \cite{EPV}. We say that
a matrix representation $D$ of $A$ preserves Hadamard products if
$D(b\circ c)=D(b)\circ D(c)$, for $b,c\in B$.

For a matrix $A$, $\tau(A)$ denotes the sum of all entries $A$.
One can see that for any two square matrices $A$ and $B$ of the
same size:
$$
\tau(A\circ B) = {\rm tr}(A B^t) = {\rm tr}(A^t B).
$$

\begin{corollary}\label{hadam}
Let $(A,B) \in \S$ be a table algebra and
let $D$ be a matrix representation of $A$ which affords $\zeta$.
Then table algebra the
$(D(A),D(B))$ is a cellular algebra iff $D$ perseveres
Hadamard products.
\end{corollary}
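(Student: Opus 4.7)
The plan is to use Theorem \ref{cel} as a bridge. Under the hypothesis $(A,B)\in\S$, that theorem tells us $(D(A),D(B))$ is a cellular algebra precisely when $D(b^*)=D(b)^t$ and $D(b)$ is a $(0,1)$-matrix for every $b\in B$. So I would reduce the corollary to showing that Hadamard preservation is equivalent to these two conditions (1) and (2). One implication is immediate: when (1) and (2) hold, the argument already given in the proof of Theorem \ref{cel} exhibits $\{D(b):b\in B\}$ as a partition of $J_n$ into pairwise disjoint $(0,1)$-matrices, so $D(b)\circ D(c)=\delta_{bc}D(b)=D(b\circ c)$ automatically.

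For the converse, assume $D$ preserves Hadamard products. Condition (2) drops out from $D(b)=D(b\circ b)=D(b)\circ D(b)$, and $D(b)\circ D(c)=0$ for $b\ne c$ gives pairwise disjoint supports. The substantive task is producing (1), and I would do this through the pivotal intermediate identity $D(B^+)=J_n$. Since the principal character $\rho$ has standard feasible multiplicity $\zeta_\rho=1$, the image $D(\varepsilon_\rho)=|B^+|^{-1}D(B^+)$ of the central primitive idempotent $\varepsilon_\rho=B^+/|B^+|$ (Lemma \ref{130308}(i) with $\chi=\rho$) has rank one. On the other hand, $D(B^+)=\sum_{b\in B}D(b)$ is a $(0,1)$-matrix with $\mathrm{tr}(D(B^+))=\zeta(B^+)=|B^+|=n$, and the only rank-one $(0,1)$-matrix of size $n\times n$ with trace $n$ is $\mathbf{1}\mathbf{1}^t=J_n$.

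Once $D(B^+)=\mathbf{1}\mathbf{1}^t$ is in hand, the algebraic identity $B^+b=bB^+=|b|B^+$ (a consequence of $\varepsilon_\rho b=\rho(b)\varepsilon_\rho$) yields $D(b)\mathbf{1}=|b|\mathbf{1}$ and $\mathbf{1}^t D(b)=|b|\mathbf{1}^t$, so each $D(b)$ has every row and column sum equal to $|b|$. Reading off the diagonal of
$$
D(b)D(b^*)=|b|I+\sum_{1_A\neq d\in B}\lambda_{bb^*d}D(d)
$$
and noting that $D(d)_{ii}=0$ for $d\ne 1_A$ (since $\mathrm{tr}(D(d))=\zeta(d)=0$ and $D(d)$ is $(0,1)$), we get $\sum_j D(b)_{ij}D(b^*)_{ji}=|b|$; since row $i$ of $D(b)$ contains exactly $|b|$ ones and each term is at most $1$, every $j$ with $D(b)_{ij}=1$ must satisfy $D(b^*)_{ji}=1$. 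A count of total ones (using $|b^*|=|b|$) then forces equality of supports and yields $D(b^*)=D(b)^t$, giving condition (1).

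The main obstacle is the identity $D(B^+)=J_n$: Hadamard preservation alone only guarantees that the $D(b)$ are disjoint $(0,1)$ tiles, not that these tiles cover the whole $n\times n$ array. Bridging this gap rests on the character-theoretic fact $\zeta_\rho=1$, which forces $D(\varepsilon_\rho)$ to be rank one. After that, the derivation of condition (1) reduces to routine row-sum and diagonal bookkeeping, and the conclusion follows via Theorem \ref{cel}.
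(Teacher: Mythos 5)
Your argument is correct, and for the crux of the sufficiency direction it takes a genuinely different route from the paper. Both proofs reduce the corollary to conditions (1) and (2) of Theorem \ref{cel} and both extract condition (2) from $D(b)=D(b\circ b)=D(b)\circ D(b)$. For condition (1), the paper argues in one line via the identity $\tau(X\circ Y)={\rm tr}(XY^t)$: since ${\rm tr}(D(b^*)D(c))=\lambda_{b^*c1_A}|B^+|=\delta_{bc}|b||B^+|$, the supports of $D(b^*)$ and $D(c)^t$ are disjoint for $b\neq c$, and it then concludes $D(b^*)=D(b)^t$. As you observe, disjointness from all the other tiles does not by itself give equality with the remaining tile unless one also knows either that the tiles cover the whole array or that the two supports have the same cardinality; the paper leaves this step implicit. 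Your proof supplies exactly the missing ingredient: $\zeta_\rho=1$ forces the idempotent $D(\varepsilon_\rho)=|B^+|^{-1}D(B^+)$ to have rank one, and a rank-one $(0,1)$-matrix with trace $n$ must be $J_n$, whence $D(B^+)=J_n$, the row/column sums of $D(b)$ equal $|b|$, and the diagonal count in $D(b)D(b^*)$ pins down $D(b^*)=D(b)^t$. So your argument is longer but more self-contained and actually closes a small gap in the published proof; the paper's trace--Hadamard identity is slicker and, once one grants the covering or counting step, gives the same conclusion with less bookkeeping. Both then finish by invoking Theorem \ref{cel}.
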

\begin{proof}
The necessity is obvious. For the sufficiency, since $D(b), b\in
B$ persevere Hadamard products, each $D(b), b\in B$ is $(0,1)$-
matrix. On the other hand,
$$
\tau(D(b^*)\circ D(c)^t)={\rm
tr}(D(b^*)D(c)) ~~  b,c\in B.
$$
But ${\rm tr}(D(b^*)D(c))=0$ iff
$b\neq c$. Thus $D(b^*)=D(b)^t$. Now the result follows from
Theorem \ref{cel} and we are done. \hfill\bull
\end{proof}

\vspace{.4cm} In the rest of this section, we suppose that $(A,B)$
is a commutative C-algebra of dimension $d$ with the set of the
primitive idempotents $\{\varepsilon_{\chi} |~ \chi \in \Irr(A)
\}$. Then from \cite[Section 2.5]{Ban} there are two matrices $P =
(p_b(\chi))$ and $Q=(q_{\chi}(b))$ in $\Mat_d(\mathbb{C})$, where
$b\in B$ and $\chi \in \Irr(A)$ such that $PQ = QP = |B^+|I$ and
\begin{eqnarray}\label{25100701}
b = \displaystyle\sum_{\chi\in \Irr(A)}
p_b(\chi)\varepsilon_{\chi}~~~ \text{ and } ~~ \varepsilon_{\chi}
= \frac{1}{|B^+|}\displaystyle\sum_ {b\in B} q_{\chi}(b)b.
\end{eqnarray}
Then from Remark (\ref{251007r}) and (\ref{25100701}) we get
\begin{eqnarray}\label{25100703}
q_{\chi}(1_A) = \zeta_{\chi} ~~~\text{and}~~~ \chi(b) = p_b(\chi),
\end{eqnarray}
where $b\in B$ and $\chi\in \Irr(A)$. The dual of $(A,B)$ in the
sense of \cite{Ban} is as follows: with each linear representation
$\Delta_{\chi} : b\mapsto p_b(\chi)$, we associate the linear
mapping $\Delta^*_{\chi}: b \mapsto q_{\chi}(b)$. Since
$Q=(q_{\chi}(b))$ is non-singular, the set $\widehat{B} =
\{\Delta^*_{\chi}:  \chi \in \Irr(A)\}$ is a linearly independent
and so form a base of the set of all linear mapping $\widehat{A}$
of $A$ into $\C$. From \cite[Thorem 5.9]{Ban} the pair
$(\widehat{A},\widehat{B})$ is a C-algebra with the identity
$1_{\widehat{A}} = \Delta ^*_{\rho}$ and involutory automorphism
which maps $\Delta^*_{\chi}$ to $\Delta^*_{\overline{\chi}}$,
where $\overline{\chi}$ is complex conjugate to $\chi$. The
C-algebra $(\widehat{A},\widehat{B})$ is called the {\it dual
C-algebra} of $(A,B)$. Moreover, the structure constants of
$(\widehat{A},\widehat{B})$ which are given in \cite[(5.26)]{Ban}
can be written as the following
\begin{eqnarray}\label{25100702}
q_{\varphi \psi}^{\chi} =
\frac{\zeta_{\varphi}\zeta_{\psi}}{|B^+|}\displaystyle\sum_{b\in
B} \frac{1}{|b|^2}p_b(\varphi)p_b(\psi)\overline{p_b(\chi)}
\end{eqnarray}
which are real numbers, where $\overline{p_b(\chi)}$ is the
complex conjugate to $p_b(\chi)$. From (\ref{25100702}) and
(\ref{23100702}) one can see that $q_{\chi,\overline{\chi}}^{\rho}
= \zeta_{\chi}$. Then $|\widehat{B}^+| = \sum_{\chi \in
\Irr(A)}\zeta_{\chi}$. The primitive idempotents $f_b, b\in B$ of
$\widehat{A}$ are given by \cite[5.23]{Ban} as the following
\begin{eqnarray}\label{idmdual}
f_b = \frac{1}{|\widehat{B}^+|}\displaystyle\sum_{\chi \in
\Irr(A)}p_b(\chi)\Delta^*_{\chi}.
\end{eqnarray}
\begin{lemma}\label{181107}
Keeping the notation above, there is a bijection correspondence
between the standard feasible multiplicities of the characters of
$(\widehat{A},\widehat{B})$ and the degrees of basis elements $B$.\hfill\bull
\end{lemma}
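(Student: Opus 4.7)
The plan is to apply Remark~\ref{251007r} to the dual C-algebra $(\widehat{A},\widehat{B})$ itself. Since $(A,B)$ is commutative, \cite[Thorem 5.9]{Ban} (recalled above) exhibits $(\widehat{A},\widehat{B})$ as a commutative C-algebra with identity $1_{\widehat{A}} = \Delta^*_\rho$, basis $\widehat{B}=\{\Delta^*_\chi : \chi \in \Irr(A)\}$, and with the set of central primitive idempotents $\{f_b : b \in B\}$ given in (\ref{idmdual}). Consequently the irreducible characters of $\widehat{A}$ are naturally parametrised by $B$; write $\widehat{\chi}_b$ for the irreducible character of $\widehat{A}$ afforded by $f_b$.

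Next I would apply Remark~\ref{251007r} to $(\widehat{A},\widehat{B})$: the standard feasible multiplicity $\zeta_{\widehat{\chi}_b}$ is the coefficient of $1_{\widehat{A}}=\Delta^*_\rho$ in the expansion of $|\widehat{B}^+|\,f_b$ along the basis $\widehat{B}$. From (\ref{idmdual}) this expansion reads
$$
|\widehat{B}^+|\,f_b \;=\; \sum_{\chi \in \Irr(A)} p_b(\chi)\,\Delta^*_\chi,
$$
so the required coefficient is precisely $p_b(\rho)$. Using (\ref{25100703}) and the definition of the principal character one finds
$$
\zeta_{\widehat{\chi}_b} \;=\; p_b(\rho) \;=\; \rho(b) \;=\; |b|.
$$

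Therefore the assignment $\widehat{\chi}_b \mapsto |b|$ furnishes the claimed bijective correspondence between the irreducible characters of $(\widehat{A},\widehat{B})$ and the degrees of the basis elements of $B$. The only delicate point is to confirm that Remark~\ref{251007r} transfers to the dual; but that remark is a formal statement about any commutative C-algebra, and the dual of a commutative C-algebra is commutative, so no extra work is needed beyond substituting $\Delta^*_\rho$ for the unit and $|\widehat{B}^+|$ for the degree sum in the dual.
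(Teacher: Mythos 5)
Your proposal is correct and follows essentially the same route as the paper: both read off from (\ref{idmdual}) that the coefficient of $1_{\widehat{A}}=\Delta^*_\rho$ in $|\widehat{B}^+|f_b$ is $p_b(\rho)$, invoke Remark~\ref{251007r} for the (commutative) dual to identify this coefficient with the standard feasible multiplicity, and use (\ref{25100703}) to conclude $p_b(\rho)=\rho(b)=|b|$. No substantive difference.
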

\begin{proof}
 From (\ref{idmdual}), one can see that the coefficient of the unit
element $1_{\widehat{A}}$ of $\widehat{A}$ in the linear
decomposition of $|\widehat{B}^+| f_b$ in terms of the basis
elements $\widehat{B}$ is equal to $p_b(\rho)$. On the other hand,
from the equation of the right hand side of (\ref{25100703}) we
get $p_b(\rho)=\rho(b) = |b|$. But from Remark \ref{251007r} any
standard feasible multiplicity of the characters of $(\widehat{A},\widehat{B})$ corresponds to
the number $p_b(\rho)$ for some $b\in B$, as desired. \hfill \bull
\end{proof}
\vspace{.4 cm}

A C-algebra is called {\it integral degree} if its all degrees
$|b|, b\in B$, are integer.
\begin{corollary}
Let $(A,B)$ be a C-algebra. Then $(A,B)$ is integral
degree and belongs to $\S$ iff so is
$(\widehat{A},\widehat{B})$.
\end{corollary}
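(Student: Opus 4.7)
The plan is to translate both conditions (integral degree and standard character) on $(A,B)$ into the corresponding conditions on $(\widehat{A},\widehat{B})$ through the degree/multiplicity swap built into the duality. First I would pin down the degree of each dual basis element $\Delta^*_\chi$: applying axiom (III) inside $(\widehat{A},\widehat{B})$ to the pair $\Delta^*_\chi$ and $(\Delta^*_\chi)^*=\Delta^*_{\overline{\chi}}$ says that $|\Delta^*_\chi|$ is the coefficient of $1_{\widehat{A}}=\Delta^*_\rho$ in the product $\Delta^*_\chi\Delta^*_{\overline{\chi}}$, and this coefficient is exactly the structure constant $q^\rho_{\chi,\overline{\chi}}$, which the paper has already identified as $\zeta_\chi$ in the discussion following (\ref{25100702}). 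Hence the multiset of degrees of $\widehat{B}$ coincides with the multiset of standard feasible multiplicities $\{\zeta_\chi:\chi\in\Irr(A)\}$ of $(A,B)$. The reverse half of the dictionary is already supplied by Lemma \ref{181107}: the standard feasible multiplicities of $(\widehat{A},\widehat{B})$ are precisely the degrees $|b|$, $b\in B$.

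With this dictionary in hand the biconditional falls out after one small observation about signs. Both $(A,B)$ and $(\widehat{A},\widehat{B})$ are commutative, so by \cite[Corollary~5.6]{Ban} all standard feasible multiplicities of either algebra are strictly positive real numbers, and the degrees of basis elements are positive by axiom (III). Moreover, every irreducible character of a commutative C-algebra has degree $1$, so membership in $\S$ is equivalent to requiring every standard feasible multiplicity to be a positive integer. Thus $(A,B)\in\S$ iff each $\zeta_\chi$ is a positive integer iff (by the first half of the dictionary) every degree of $\widehat{B}$ is a positive integer iff $(\widehat{A},\widehat{B})$ is of integral degree; dually, $(A,B)$ is of integral degree iff each $|b|$ is a positive integer iff (by Lemma \ref{181107}) every standard feasible multiplicity of $(\widehat{A},\widehat{B})$ is a positive integer iff $(\widehat{A},\widehat{B})\in\S$. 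Taking the conjunction of these two equivalences yields the claimed biconditional, and the symmetry of the argument means no separate "converse" is needed.

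I do not anticipate any real obstacle. The whole content of the corollary is the bookkeeping identity "the degrees of a commutative C-algebra are the standard feasible multiplicities of its dual, and vice versa". The only mild care needed is to read the unlabelled identity $q^\rho_{\chi,\overline{\chi}}=\zeta_\chi$ as asserting the degree of $\Delta^*_\chi$; once that is in place the proof reduces to a one-line chain of equivalences cast symmetrically in $(A,B)$ and $(\widehat{A},\widehat{B})$.
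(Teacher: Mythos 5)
Your proposal is correct, and its forward half coincides with the paper's: the identity $q^{\rho}_{\chi,\overline{\chi}}=\zeta_{\chi}$ (read via axiom (III) as the degree of $\Delta^*_{\chi}$) gives ``$(A,B)\in\S$ implies $(\widehat{A},\widehat{B})$ integral degree,'' and Lemma \ref{181107} gives ``$(A,B)$ integral degree implies $(\widehat{A},\widehat{B})\in\S$.'' Where you genuinely diverge is the converse. The paper handles it by applying the necessity direction to $(\widehat{A},\widehat{B})$ itself, landing on the double dual $(\widehat{\widehat{A}},\widehat{\widehat{B}})$, and then pulling the conclusion back to $(A,B)$ via the Duality Theorem \cite[Theorem 5.10]{Ban} together with Lemma \ref{251107} (invariance of standard feasible multiplicities under C-algebra isomorphism). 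You instead observe that both halves of the degree/multiplicity dictionary are honest equivalences --- the degrees of $\widehat{B}$ \emph{are} the multiset $\{\zeta_{\chi}\}$ and the multiplicities of $\widehat{B}$ \emph{are} the multiset $\{|b|\}$, all quantities being positive by axiom (III) and \cite[Corollary 5.6]{Ban} --- so the biconditional follows with no appeal to the double dual at all. Your route is more economical (it dispenses with the Duality Theorem and Lemma \ref{251107}); the paper's route has the mild virtue of reusing already-proved machinery and of not requiring you to check that the two correspondences are reversible, but that check is trivial here. One small point worth making explicit in your write-up: the corollary's hypothesis tacitly includes the standing assumption, stated at the start of the section, that $(A,B)$ is commutative, which is what licenses both the construction of the dual and the positivity of the $\zeta_{\chi}$.
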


\begin{proof}
Let $(A,B)$ be a C-algebra and $(\widehat{A},\widehat{B})$ be its
dual with the standard feasible traces $\zeta$ and
$\widehat{\zeta}$, respectively. To prove the necessity, since
$(A,B)$ is in $\mathcal{S}$ the equality
$q^{\rho}_{\chi,\overline{\chi}} = \zeta_{\chi}$ implies that
$(\widehat{A},\widehat{B})$ is integral degree. Since $(A,B)$ is
integral degree, from Lemma \ref{181107} we conclude that
$(\widehat{A},\widehat{B})$ is in $\mathcal{S}$.

To prove the sufficiency, by the necessity we see that
$(\widehat{\widehat{A}}, \widehat{\widehat{B}})\in \mathcal{S}$ is
integral degree. Now the proof follows from Lemma \ref{251107} and
the Duality Theorem \cite[Theorem 5.10]{Ban}, i.e., $(A,B) \simeq
(\widehat{\widehat{A}}, \widehat{\widehat{B}})$. \hfill \bull
\end{proof}

\end{document}